\chardef\bslash=`\\ 
\numberwithin{equation}{section}
\newtheorem{theorem}{Theorem}[section]
\newtheorem{corollary}[theorem]{Corollary}
\newtheorem{lemma}[theorem]{Lemma}
\newtheorem{proposition}[theorem]{Proposition}
\theoremstyle{remark}
\newtheorem{remark}[theorem]{Remark}
\newtheorem{example}[theorem]{Example}
\theoremstyle{definition}
\newcommand\bp{\begin{proof}}
\newcommand\ep{\end{proof}}
\newcommand{\res}{\operatorname{res}}
\newcommand{\inv}{^{-1}}
\newcommand{\nxnx}{{\mathbb N \rtimes \mathbb N^\times}}
\newcommand{\N}{\mathbb N}
\newcommand{\Z}{\mathbb Z}
\newcommand{\Q}{\mathbb Q}
\newcommand{\OO}{\mathcal O}
\newcommand{\B}{\mathcal B}
\newcommand{\Prim}{\operatorname{Prim}}
\newcommand{\Prime}{\operatorname{Prime}}
\newcommand{\ind}{\operatorname{ind}}
\newcommand{\I}{\mathcal I}
\newcommand\af{{\mathbb{A}_f}}
\newcommand{\cA}{\mathfrak A}
\newcommand{\cP}{\mathcal P}
\newcommand{\cQ}{\mathcal Q}
\newcommand{\cT}{\mathfrak T}
\newcommand{\rxrx}{{R \rtimes R^\times}}
\newcommand{\rx}{R^{\times}}
\newcommand{\rhat}{{\hat R}}
\newcommand{\kxkx}{{K\rtimes K^*}}
\def\oma{\Omega_\af}
\def\omr{\Omega_\rhat}
\begin{document}

\title[Primitive ideal space of $\mathfrak T(R)$]{The primitive ideal space of the C*-algebra of \\the affine semigroup of algebraic integers} 
\author{Siegfried Echterhoff}
\date{26 January 2012}
\author[M. Laca]{Marcelo Laca}
\address{Siegfried Echterhoff, Mathematisches Institut, Einsteinstr. 62, 48149
M\"unster, Germany}
\email{echters@uni-muenster.de}
\address{Marcelo Laca, Department of Mathematics
and Statistics, P.O.B. 3060, University of Victoria,
Victoria, B.C. Canada V8W 3R4}  
\email{laca@uvic.ca}

\begin{abstract}
The purpose of this note is to give a complete description of the primitive ideal space of the  {C*-algebra $\cT[R]$ associated} to 
the ring of integers $R$ in a number field $K$  in the recent paper \cite{CDL}.
As explained in \cite{CDL}, $\cT[R]$ can be realized as {the Toeplitz C*-algebra of the affine semigroup $\rxrx$ over $R$ and} as a full corner of a crossed product
$C_0(\oma)\rtimes K\rtimes K^*$, where $\oma$ is a certain {adelic space.}
Therefore $\Prim(\cT[R])$ is homeomorphic to the primitive ideal space of  
this crossed product. Using a recent result of Sierakowski
together with the fact that every quasi-orbit for the action of $K\rtimes K^*$ on $\oma$ contains at least one point with trivial stabilizer we show that $\Prim(\cT[R])$ is homeomorphic to the quasi-orbit space for the 
action of $K\rtimes K^*$ on $\oma$, which in turn may be identified with  
 the power set $2^{\mathcal P}$ of the set of 
prime ideals $\mathcal P$  of R equipped with the power-cofinite topology.

\end{abstract}
\thanks{2000 Mathematics Subject Classification. Primary 46L05, 46L80; Secondary 20Mxx, 11R04.}
  \thanks{The research for this paper was partially supported by the Deutsche Forschungsgemeinschaft 
(SFB 878) and by the Natural Sciences and Engineering Research Council of Canada}

\maketitle
\section{Introduction}
Let $R$ be the ring of integers in a number field $K$. 
In the recent paper \cite{CDL} Cuntz, Deninger and Laca  introduce an  algebra $\cT[R]$  attached to $R$ and they show that this algebra has a very interesting KMS-structure relative to a certain natural one parameter group of automorphisms.
The algebra $\cT[R]$ is an extension of the C*-algebra $A[R]$ studied previously in
\cite{C}, \cite{CL}, \cite{CL1} and, in contrast to $A[R]$, it is functorial under homomorphisms of rings.
While $A[R]$ is always simple, the new algebra $\cT[R]$ has a fairly rich ideal structure, and it is the aim of this note to give a detailed description of the primitive ideal space $\Prim(\cT[R])$ as a topological space. Since the closed ideals in a C*-algebra $A$ correspond bijectively and {inclusion-preserving} to the open subsets of $\Prim(A)$, we obtain a complete picture of the ideal structure of $\cT[R]$. 
A related extension $\mathcal T(\nxnx)$ of the C*-algebra $\OO_\N$ from \cite{C} had been introduced in \cite{LR-adv}, and although strictly speaking it is not equal to $\cT[\Z]$, its structure is similar enough
for our methods to apply  there as well.

It is shown in \cite{CDL} that
$\cT[R]$ is a full corner in a crossed product $C_0(\oma)\rtimes K\rtimes K^*$, where $\oma$ is a certain quotient space of the product $\mathbb A_f\times \mathbb A_f$ of the finite adele space $\mathbb A_f$ {over $K$} by itself. Since Morita equivalent C*-algebras have homeomorphic primitive ideal spaces, it therefore suffices to describe the primitive ideal space of this crossed product.
The situation is somewhat similar to the computation of the primitive ideal space of the original Bost-Connes algebra as performed by Laca and Raeburn in \cite{LR}. {Since the Bost-Connes  algebra 
is Morita equivalent to  the crossed product $C_0(\mathbb A_f)\rtimes \Q_+^*$ and since $\Q_+^*$ is an abelian group one can use  a theorem of Williams based on the Mackey machine  (e.g. see \cite[Theorem 1.1]{LR})  to compute  the primitive ideal space of the crossed product as a certain quotient of $\mathbb A_f\times \widehat{\Q_+^*}$, where $\widehat{\Q_+^*}$ denotes the Pontrjagin dual of $\Q_+^*$.}

The situation  becomes a bit more complicated in {the present case} since the $ax+b$-group $K\rtimes K^*$ is not abelian and the action of $K\rtimes K^*$ on $\oma$ has wildly varying stabilizer groups. However, we shall see in \S 3 below that every quasi-orbit for the action of $K\rtimes K^*$ on $\oma$ contains  at least one orbit with trivial stabilizers. It turns out that this suffices to prove that $\Prim(C_0(\oma)\rtimes K\rtimes K^*)$ is homeomorphic to the quasi-orbit space $\oma/\!\sim$ via an induction procedure (recall that  if $G$ is a group acting on a topological space $X$, then two elements $x,y\in X$ lie in the same {\em quasi-orbit} if $\overline{Gx}= \overline{Gy}$).  We deduce this fact in \S 2 in a much more general setting 
from a recent result of Sierakowski  \cite{Sier} which extends earlier results of Renault  \cite{Renault} and of Archbold and Spielberg \cite{AS}.

We  give a precise description of the quasi-orbit space $\oma/\!\sim$ in \S 3, showing that it is homeomorphic to
the power set $2^{\mathcal P}$ of the set of 
prime ideals $\mathcal P$  of R equipped with the power-cofinite topology.

\section{The primitive ideal space of crossed products by essentially free actions}\label{GRST}

Let $G$ be a {countable discrete group acting by automorphisms on a separable} C*-algebra $A$. We {will} give a detailed description of the primitive ideal space of the reduced crossed product 
$A\rtimes_rG$ in the case where the action {of $G$ on $A$} is exact and the {associated} action of $G$ on the space 
$\widehat{A}$ of equivalence classes of irreducible representations of $A$ is  {\em essentially free} in the sense that every closed subset $C$  of $\widehat{A}$  contains a dense invariant  subset $D$ such that $G$ acts freely on $D$. Our method builds up on recent work of Sierakowski \cite{Sier}, {which relies} on a central result of the  paper \cite{AS} 
by Archbold and Spielberg and {extends} earlier results by Renault (see \cite[Corollary 4.9]{Renault}), 

Recall that an action of the group $G$ on $A$ is {\em exact}, if for any $G$-invariant ideal $I\subseteq A$ the 
corresponding sequence of reduced crossed products
$$0\to I\rtimes_rG\to A\rtimes_r G\to (A/I)\rtimes_rG\to0$$
is exact. The group $G$ is called {\em exact} if every action of $G$ on any C*-algebra $A$ is exact. 
The class of exact groups is quite large. It contains  all amenable groups and all countable groups which can be embedded (as abstract groups) in {a} linear group over some field $K$ (in particular all free groups, see \cite{GHW}). We refer to the discussion in \cite{Sier} for further details.

For a C*-algebra $A$ we denote by  $\mathcal I(A)$ the set of closed ideals in $A$ equipped with the
 {\em Fell topology} for which a sub-base of open sets is given by the sets of the form
$$U_I:=\{J\in \mathcal I(A): I\not\subseteq J\},\quad I\in \mathcal I(A).$$
Restricted to $\Prim(A)$, this is the usual Jacobson topology. 
Note that if  $A$ and $B$ are two C*-algebras 
and $\varphi:\mathcal I(A)\to \mathcal I(B)$ is any map which preserves inclusion of ideals, 
then $\varphi$ is continuous.
If $(A,G,\alpha)$ is a system, we denote by $\I^G(A)\subseteq \I(A)$ the set of $G$-invariant closed ideals in $A$ equipped with the topology restricted from $\I(A)$. 
We will need the following formulation of Sierakowski's results. 

\begin{theorem}[{Sierakowski}]\label{Sierakowski} 
Suppose that $(A,G,\alpha)$ is a dynamical system with $G$ discrete such that the action of $G$ on $A$ is exact and the  action of $G$ on $\widehat{A}$ is essentially free. Then 
the map
$$\res: \I(A\rtimes_rG)\to \I^G(A); \quad J\mapsto J\cap A$$ is 
a homeomorphism with inverse map given by $$\ind:\I^G(A)\to \I(A\rtimes_rG); \quad \ind(I)=I\rtimes_rG.$$
\end{theorem}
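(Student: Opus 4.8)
The plan is to verify that $\res$ and $\ind$ are well-defined, inclusion-preserving, and mutually inverse; once this is done the homeomorphism assertion comes for free. Indeed, the remark preceding the theorem guarantees that any inclusion-preserving map between ideal spaces is automatically continuous, so both $\res$ and $\ind$ will be continuous, and being mutually inverse bijections each is then a homeomorphism. Thus the entire topological content reduces to a purely algebraic bijection statement.

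First I would check that the two maps land in the stated target sets and respect inclusions. For $\res$: the canonical unitaries $u_g$ in the multiplier algebra of $A\rtimes_r G$ implement the action via $\alpha_g(a)=u_g a u_g^*$, so for any ideal $J$ of $A\rtimes_r G$ the intersection $J\cap A$ is carried into itself by each $\alpha_g$; hence $\res(J)\in\I^G(A)$. For $\ind$: when $I$ is $G$-invariant, exactness of the action supplies the short exact sequence $0\to I\rtimes_r G\to A\rtimes_r G\to (A/I)\rtimes_r G\to 0$, which realizes $I\rtimes_r G$ as a genuine closed two-sided ideal of $A\rtimes_r G$, so $\ind(I)\in\I(A\rtimes_r G)$. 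Both maps clearly preserve inclusions. The easy composition $\res\circ\ind=\id$ follows from the faithful canonical conditional expectation $E\colon A\rtimes_r G\to A$, which satisfies $E(I\rtimes_r G)=I$ and restricts to the identity on $A$: any $a\in (I\rtimes_r G)\cap A$ then obeys $a=E(a)\in I$, while the reverse inclusion is obvious. This step uses neither exactness nor freeness.

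The substance is the other composition $\ind\circ\res=\id$, i.e. $J=(J\cap A)\rtimes_r G$ for every $J\in\I(A\rtimes_r G)$. The inclusion $(J\cap A)\rtimes_r G\subseteq J$ holds because $J$ is an ideal containing $J\cap A$. For the reverse inclusion I would pass to the quotient: set $I:=J\cap A$ and use exactness to identify $(A\rtimes_r G)/(I\rtimes_r G)$ with $(A/I)\rtimes_r G$. Under this identification the image $\bar J$ of $J$ is an ideal of $(A/I)\rtimes_r G$ whose intersection with $A/I$ is zero. It therefore suffices to prove the \emph{intersection property}: in an essentially free exact system, every nonzero ideal of the reduced crossed product meets the coefficient algebra nontrivially. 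Applying this to the quotient system forces $\bar J=0$, i.e. $J\subseteq I\rtimes_r G$, as required.

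The main obstacle is exactly this intersection property, and it is where essential freeness enters, following Archbold and Spielberg \cite{AS}. Two points must be handled. First, essential freeness has to descend to the quotient: the spectrum of $A/I$ is homeomorphic to the closed invariant subset of $\widehat A$ corresponding to $I$, and by hypothesis this closed set contains a dense invariant subset on which $G$ acts freely, so the quotient action is again essentially free; this inheritance under closed invariant quotients is precisely why essential freeness, rather than mere topological freeness of $\widehat A$, is the correct hypothesis. Second, one must show that a nonzero ideal $\bar J$ is detected by $A/I$; this is the Fourier-coefficient / conditional-expectation argument of \cite{AS}: starting from a nonzero positive $x\in\bar J$ one has $E(x)\neq 0$ by faithfulness of $E$, and the dense free subset is used to produce an element of $A/I$ supported where the off-diagonal terms $E(x u_g^*)$ with $g\neq e$ are suppressed, thereby extracting a nonzero element of $\bar J\cap(A/I)$ and contradicting $\bar J\cap(A/I)=0$. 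This detection step, packaged as in Sierakowski \cite{Sier}, is the technical heart of the proof; the remaining bookkeeping (well-definedness, the composition $\res\circ\ind=\id$, and continuity via the inclusion-preserving remark) is routine.
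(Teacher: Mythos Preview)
Your proposal is correct and follows essentially the same route as the paper: the bijection comes from Sierakowski's results (which in turn rest on the Archbold--Spielberg intersection property), and the homeomorphism assertion follows because both maps preserve inclusion. The only difference is one of granularity: the paper simply cites \cite[Proposition~1.3 and Theorem~1.20]{Sier} for the bijection and then invokes the inclusion-preserving remark, whereas you unpack those citations into the well-definedness, the easy composition $\res\circ\ind=\id$ via the conditional expectation, the reduction of $\ind\circ\res=\id$ to the intersection property on the quotient system, and the inheritance of essential freeness under invariant quotients.
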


\begin{proof} This follows from  combining \cite[Proposition 1.3]{Sier} with \cite[{Theorem 1.20}]{Sier}
and observing that 
since both maps obviously preserve {inclusion} of ideals, they are homeomorphisms with respect to the Fell-topologies.
\end{proof}

\begin{remark} (1) The assumption that the action of $G$ on $\widehat{A}$ is essentially free is certainly weaker than the condition that the action of $G$ on $\Prim(A)$ is essentially free. This follows directly from the definitions {of essential freeness} and of the Jacobson topologies on $\widehat{A}$ and $\Prim(A)$ (e.g. see \cite[Chapter 3]{Dix}).

 (2) It is shown in  \cite[\S 2]{Sier} that the assumption that the action of $G$ on $\widehat{A}$ is essentially free can be replaced by a somewhat weaker assumption which Sierakowski calls the {\em residual Rokhlin* property}. Thus all results discussed below will remain true if the assumption that the action of $G$  on $\widehat{A}$ is essentially free is replaced by the assumption that the action satisfies the residual Rokhlin* property.
 
\end{remark}

Suppose that $(A,G,\alpha)$ is a C*-dynamical system. A $G$-invariant ideal 
$I\subseteq A$ is called {\em $G$-prime}  if 
for any pair of closed $G$-invariant ideal $J_1, J_2\subseteq A$ with $J_1\cap J_2\subseteq I$ we have 
$J_i\subseteq I$ for $i=1$ or $i=2$. We denote by $\Prime^G(A)$ the set of $G$-prime ideals in $A$ and we denote by $\Prime(A)$ the set of prime ideals in $A$ (the case where $G$ is the trivial group).
The Fell-topology on $\I(A)$ induces topologies on $\Prime^G(A)$ and $\Prime(A)$.

\begin{proposition}\label{prop-prime}
Suppose that $(A,G,\alpha)$ satisfies the assumptions of Theorem \ref{Sierakowski}. Then the map
$\res:\I(A\rtimes_r G)\to \I^G(A)$ restricts to a homeomorphism $\Prime(A\rtimes_rG)\cong \Prime^G(A)$. 
\end{proposition}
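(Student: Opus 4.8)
The plan is to exploit the fact that \thmref{Sierakowski} already delivers much more than a set-theoretic bijection: it provides a pair of mutually inverse, inclusion-preserving maps $\res$ and $\ind$ between the lattices $\I(A\rtimes_rG)$ and $\I^G(A)$, so that these two lattices are order-isomorphic. Since primeness (respectively $G$-primeness) of an ideal is, by definition, a property expressed entirely in terms of inclusions and finite intersections of ideals inside the relevant lattice, I expect the statement to follow from a purely lattice-theoretic argument, with no further input from the dynamics beyond \thmref{Sierakowski}.

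First I would record that both maps commute with finite intersections. For $\res$ this is immediate from the defining formula, since $(J_1\cap J_2)\cap A=(J_1\cap A)\cap(J_2\cap A)$. For $\ind$ it then follows automatically: an order isomorphism of lattices preserves every meet that exists, and the intersection of two closed ideals is precisely their meet in $\I(A\rtimes_rG)$ and in $\I^G(A)$; alternatively one checks $\res\bigl(\ind(I_1)\cap\ind(I_2)\bigr)=I_1\cap I_2$ and inverts to get $\ind(I_1\cap I_2)=\ind(I_1)\cap\ind(I_2)$.

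Next I would verify that $\res$ carries $\Prime(A\rtimes_rG)$ into $\Prime^G(A)$. Given a prime $J$ and $G$-invariant ideals $I_1,I_2$ with $I_1\cap I_2\subseteq\res(J)$, applying $\ind$ and using that it preserves intersections and inclusions gives $\ind(I_1)\cap\ind(I_2)\subseteq J$; primeness of $J$ yields $\ind(I_i)\subseteq J$ for some $i$, and applying $\res$ returns $I_i\subseteq\res(J)$, so $\res(J)$ is $G$-prime. The reverse direction---that $\ind$ carries $\Prime^G(A)$ into $\Prime(A\rtimes_rG)$---is the symmetric argument, now starting from arbitrary closed ideals $J_1,J_2$ of the crossed product with $J_1\cap J_2\subseteq\ind(I)$ and applying $\res$. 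Since $\res$ and $\ind$ are mutually inverse, these two inclusions together show that $\res$ restricts to a bijection $\Prime(A\rtimes_rG)\to\Prime^G(A)$ with inverse $\ind$.

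Finally, because the topologies on $\Prime(A\rtimes_rG)$ and $\Prime^G(A)$ are by definition the subspace topologies inherited from the Fell topologies on $\I(A\rtimes_rG)$ and $\I(A)$, the restriction of the homeomorphism $\res$ of \thmref{Sierakowski} to these subsets is automatically a homeomorphism onto its image. I do not anticipate a genuine obstacle: the only point requiring a moment's care is the compatibility of both maps with finite intersections---in particular the identity $\ind(I_1\cap I_2)=\ind(I_1)\cap\ind(I_2)$, which is where the order-isomorphism property of \thmref{Sierakowski} does the real work, rather than any direct computation with induced ideals in the crossed product.
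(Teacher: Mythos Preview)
Your proposal is correct and follows essentially the same route as the paper's proof: both arguments show that $\res$ and $\ind$ interchange prime and $G$-prime ideals by using that these mutually inverse inclusion-preserving maps are compatible with finite intersections, and then invoke \thmref{Sierakowski} for the homeomorphism. The only difference is cosmetic---you make the lattice-theoretic identity $\ind(I_1\cap I_2)=\ind(I_1)\cap\ind(I_2)$ explicit, whereas the paper uses it without comment.
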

\begin{proof} Suppose that $I\in\Prime(A\rtimes_rG)$ and let $J_1, J_2\in \I^G(A)$ such that 
$J_1\cap J_2\subseteq I\cap A$. It follows then from Theorem \ref{Sierakowski} that 
$J_1\rtimes_rG\cap J_2\rtimes_rG\subseteq (I\cap A)\rtimes_rG=I$ and hence that 
$J_i\rtimes_rG\subseteq I$ for some $i\in \{1,2\}$. But then we also have 
$J_i=(J_i\rtimes_rG)\cap A\subseteq I\cap A$, which proves that $I\cap A$ is $G$-prime.
A very similar argument shows that $J\rtimes_rG$ is prime if $J\in \Prime^G(A)$.
The result then follows from Theorem \ref{Sierakowski}.
\end{proof}

Recall that the primitive ideals in a C*-algebra are the kernels of the irreducible representations of $A$. If
$(A,G,\alpha)$ is a system, then an ideal $I\in \I^G(A)$ is called {\em $G$-primitive} if $I=P^G:=\cap_{g\in G}\alpha_g(P)$ for some primitive ideal $P$ of $A$. We denote by $\Prim^G(A)$ the set of $G$-primitive ideals equipped with the Fell-topology. 

Moreover, if a group $G$ acts on a topological space $X$, then the {\em quasi-orbit space} $\OO(X)$ is defined as the quotient space $X/\sim$ by the equivalence relation $x\sim y\Leftrightarrow \overline{G x}=\overline{G y}$. In what follows we denote by $\OO(x)$ the {\em quasi-orbit of $x$}, i.e., the equivalence class of $x$ under this relation.
The following lemma is \cite[Lemma on p. 221]{Green}:

\begin{lemma}[{Green}]\label{primG}
Suppose that $(A,G,\alpha)$ is a system. Then 
$\OO(\Prim(A))\to \Prim^G(A);$  ${\OO(P)}\mapsto P^G$
is a homeomorphism
\end{lemma}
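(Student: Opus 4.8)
The plan is to realize the asserted bijection as the map induced on the quasi-orbit quotient by the surjection $q\colon\Prim(A)\to\Prim^G(A)$, $q(P)=P^G$, and then to prove that $q$ is continuous and open for the Jacobson topology on $\Prim(A)$ and the Fell topology on $\Prim^G(A)$. The essential input is the hull-kernel description of closures in $\Prim(A)$: for a subset $S$ one has $\overline{S}=\{Q:Q\supseteq\cap_{P\in S}P\}$ and $\cap_{P\in\overline S}P=\cap_{P\in S}P$. Applying this to the orbit $S=\{\alpha_g(P):g\in G\}$ gives $\overline{GP}=\{Q:Q\supseteq P^G\}$, so that $P^G$ is exactly the kernel of the orbit closure.

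First I would use this to factor $q$ through a bijection $\bar q\colon\OO(\Prim(A))\to\Prim^G(A)$. Since $P^G=\cap_{Q\in\overline{GP}}Q$, the ideal $P^G$ depends only on the closed set $\overline{GP}$, and two primitive ideals have the same orbit closure precisely when they determine the same $G$-primitive ideal; hence $q(P)=q(Q)\Leftrightarrow\OO(P)=\OO(Q)$. Surjectivity onto $\Prim^G(A)$ is immediate from the definition of $G$-primitivity, so $\bar q$ is a well-defined bijection, and the quotient topology makes $\bar q$ continuous if and only if $q$ is.

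Next I would check continuity of $q$. A subbasic Fell-open set is $U_I\cap\Prim^G(A)$ with $I\in\I(A)$, and since $P^G=\cap_g\alpha_g(P)$ one has $I\not\subseteq P^G$ exactly when $\alpha_{g^{-1}}(I)\not\subseteq P$ for some $g$; after reindexing, $q^{-1}(U_I\cap\Prim^G(A))=\bigcup_{g\in G}\{P\in\Prim(A):\alpha_g(I)\not\subseteq P\}$ is a union of Jacobson-open sets, so $q$ and hence $\bar q$ is continuous.

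The hard part will be openness of $\bar q$, which amounts to showing that $q$ carries saturated (that is, $G$-invariant) open sets to Fell-open sets. The saturated closed subsets of $\Prim(A)$ are precisely the hulls of $G$-invariant ideals, so the saturated open sets are exactly the $U=\{P:I\not\subseteq P\}$ with $I\in\I^G(A)$, and it suffices to prove the identity $q(U)=U_I\cap\Prim^G(A)$. The inclusion $q(U)\subseteq U_I$ follows from $P^G\subseteq P$; for the reverse inclusion, given $Q=P^G$ with $I\not\subseteq Q$, the $G$-invariance of $I$ forces $I\not\subseteq\alpha_g(P)$ for some $g$, and since $\alpha_g(P)$ is primitive with $(\alpha_g(P))^G=Q$, the point $\alpha_g(P)$ lies in $U$ and maps to $Q$. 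Granting this identity, $\bar q$ is a continuous open bijection, hence a homeomorphism. I expect the bookkeeping with the $G$-invariance of $I$ and the precise matching of saturated closed sets with $G$-invariant ideals to be the only genuinely delicate point; everything else is a direct translation between the hull-kernel and Fell topologies.
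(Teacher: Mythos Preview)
Your argument is correct. Note, however, that the paper does not actually prove this lemma: it is stated with attribution and the reader is referred to \cite[Lemma on p.~221]{Green} for the proof. What you have written is a clean, self-contained hull--kernel argument that reproduces Green's result directly, so there is nothing in the paper to compare against beyond the citation.

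One small remark on your openness step: you invoke the $G$-invariance of $I$ when showing $q(U)=U_I\cap\Prim^G(A)$, but in fact the argument goes through for an arbitrary ideal $I$. Indeed, if $Q=P^G$ and $I\not\subseteq Q$, then $I\not\subseteq\alpha_g(P)$ for some $g$, and $\alpha_g(P)$ already lies in $U=\{P':I\not\subseteq P'\}$ with $q(\alpha_g(P))=Q$; no invariance of $I$ is needed. So $q$ is in fact open on all of $\Prim(A)$, not merely on saturated open sets, which slightly streamlines the proof. The restriction to saturated opens (equivalently, $G$-invariant ideals) is of course sufficient for the conclusion and is not wrong, just unnecessary.
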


The following lemma is also well known to the experts.  For completeness we give a proof, which is an adaptation of the proof of \cite[Corollary on p. 222]{Green}.

\begin{lemma}\label{lem-prime}
Let $(A,G,\alpha)$ be a system with $A$ separable.  Then $\Prime^G(A)=\Prim^G(A)$.
\end{lemma}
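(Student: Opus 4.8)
The plan is to prove the two inclusions $\Prim^G(A)\subseteq\Prime^G(A)$ and $\Prime^G(A)\subseteq\Prim^G(A)$ separately, the first being elementary and the second resting on a Baire category argument in which the separability of $A$ is genuinely used. For the easy inclusion I would take a $G$-primitive ideal $I=P^G=\bigcap_{g\in G}\alpha_g(P)$ with $P$ primitive. Since primitive ideals are prime, if $J_1,J_2\in\I^G(A)$ satisfy $J_1\cap J_2\subseteq I\subseteq P$ then $J_1\subseteq P$ or $J_2\subseteq P$; assuming $J_1\subseteq P$ and using that $J_1$ is $G$-invariant gives $J_1=\alpha_g(J_1)\subseteq\alpha_g(P)$ for every $g$, hence $J_1\subseteq\bigcap_g\alpha_g(P)=I$. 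Thus $I$ is $G$-prime.

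For the substantial inclusion $\Prime^G(A)\subseteq\Prim^G(A)$, given a $G$-prime ideal $I$ I would first reduce to the case $I=0$ by passing to the quotient system $(A/I,G,\bar\alpha)$. Because $G$-invariant (respectively primitive) ideals of $A/I$ correspond to $G$-invariant (respectively primitive) ideals of $A$ containing $I$, and $I$ itself is $G$-invariant, the zero ideal is $G$-prime in the separable algebra $A/I$, and any primitive $\bar P\subseteq A/I$ with $\bigcap_g\bar\alpha_g(\bar P)=0$ lifts to a primitive $P\supseteq I$ with $\bigcap_g\alpha_g(P)=I$. So it suffices to show that if $0$ is $G$-prime, then there is a primitive ideal $P$ with $\bigcap_g\alpha_g(P)=0$.

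The next step is to translate this into the Jacobson topology on $\Prim(A)$. Under the inclusion-preserving bijection between closed ideals and open subsets of $\Prim(A)$, $G$-invariant ideals correspond to $G$-invariant open sets, intersections of ideals to intersections of open sets (this uses that points of $\Prim(A)$ are prime), and the zero ideal to $\emptyset$; hence ``$0$ is $G$-prime'' says exactly that any two nonempty $G$-invariant open subsets of $\Prim(A)$ meet. Moreover, since the hull-kernel closure of $G\cdot P$ is $\{Q:\bigcap_g\alpha_g(P)\subseteq Q\}$ and the intersection of all primitive ideals is $0$, the condition $\bigcap_g\alpha_g(P)=0$ is equivalent to $G\cdot P$ being dense, so the goal becomes the production of a point with dense orbit. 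To achieve this I would fix a countable base $\{U_n\}$ for $\Prim(A)$ (available from separability) and consider the $G$-invariant open saturations $G\cdot U_n$. Each nonempty $G\cdot U_n$ is dense: for any basic open $U_m\neq\emptyset$ the two nonempty $G$-invariant open sets $G\cdot U_n$ and $G\cdot U_m$ meet, and $G$-invariance of $G\cdot U_n$ then forces it to meet $U_m$ itself. By the Baire property the dense $G_\delta$ set $\bigcap_{U_n\neq\emptyset}G\cdot U_n$ is nonempty, and any $P$ in it has orbit meeting every nonempty basic open set, i.e. a dense orbit.

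The main obstacle is precisely this last step: I must invoke the fact that the Jacobson space of a separable C*-algebra is a Baire space, which is where second countability (and the local-compactness/completeness of $\Prim(A)$) enters, and I must verify carefully that the dictionary between invariant ideals and invariant open sets really converts ``$G$-prime'' into ``any two nonempty invariant open sets intersect'' and ``$\bigcap_g\alpha_g(P)=0$'' into ``$G\cdot P$ is dense.'' The remaining passages through the quotient and the easy inclusion are routine.
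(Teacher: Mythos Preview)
Your proof is correct and follows the same underlying idea as the paper's, namely a Baire-category argument in the primitive ideal space, but the packaging differs in a useful way. The paper does not pass to the quotient $A/I$; instead it works with the hull $F=\{Q:I\subseteq Q\}\subseteq\Prim(A)$, passes to the quasi-orbit space $\mathcal O(\Prim(A))\cong\Prim^G(A)$, and shows that the image $C=q(F)$ is an irreducible closed set. It then quotes Green's lemma (a totally Baire, second countable space has every irreducible closed subset equal to the closure of a point) to conclude $F=\overline{G\cdot P}$ for some $P$. Your argument instead first reduces to $I=0$ by quotienting, translates $G$-primeness of $0$ into ``any two nonempty $G$-invariant open sets of $\Prim(A)$ meet,'' and then runs the Baire argument explicitly in $\Prim(A)$ to produce a dense orbit. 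Your route is slightly more self-contained (it unpacks Green's lemma rather than citing it) and the reduction to $I=0$ streamlines the bookkeeping; the paper's route has the virtue of working entirely on the closed-set/irreducibility side and making the connection to the quasi-orbit space (and Lemma~\ref{primG}) explicit. Both rely on exactly the same two facts about $\Prim(A)$ for separable $A$: second countability and the Baire property.
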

\begin{proof} We first remark that every primitive ideal is prime, since
if $P=\ker\pi$ for some irreducible representation $\pi:A\to \mathcal B(H)$ and if $J_1,J_2\in \I(A)$ such that $J_1\cap J_2\subseteq P$,  then $J_i\subseteq P$ for some $i\in \{1,2\}$ because otherwise  $\pi$ would restrict to irreducible representations on $J_1$ and $J_2$ and then 
$\{0\}=\overline{\pi(J_1\cap J_2)H}=\overline{\pi(J_1)\pi(J_2)H}=H.$

Suppose now that $I=P^G$ is $G$-primitive. Then, if $J_1,J_2\in \I^G(A)$ 
such that $J_1\cap J_2\subseteq I$, we have $J_1\cap J_2\subseteq P$, and hence $J_i\subseteq P$ for some $i\in \{1,2\}$. But then $J_i=J_i^G\subseteq P^G=I$. Hence $I$ is also $G$-prime.

For the converse 
let $I\in \Prime^G(A)$. We need to show that there exists a $P\in \Prim(A)$ with $I=P^G$. For this let $F\subseteq \Prim(A)$ be the hull of $I$, i.e., $F=\{Q\in \Prim(A): I\subseteq Q\}$. This is a closed $G$-invariant subset of $\Prim(A)$ and we need to show that
$F=\overline{G(P)}$ for some $P\in \Prim(A)$. This means that the image $C:=q(F)\subseteq \mathcal O(\Prim(A))$ is the closure of a single point $\mathcal O(P)\in \mathcal O(\Prim(A))$, where $q:\Prim(A)\to\mathcal O(\Prim(A))$ denotes the quotient map.
 It follows from the discussion preceding \cite[Lemma on p. 222]{Green}  that $\mathcal O(\Prim(A))\cong \Prim^G(A)$ is a totally Baire space, and it is second countable since $A$ is separable.
By \cite[Lemma on p. 222]{Green}, {to conclude that $F=\overline{G(P)}$} it suffices to show that $C$ is irreducible, which means that it cannot be written as a union of two {proper closed subsets $C_1, C_2$}.
So assume that $C_1,C_2$ are closed subsets of $\OO(\Prim(A))$ with $C=C_1\cup C_2$. Let $F_1, F_2$ denote their inverse images in $\Prim(A)$ and let $J_i=\ker F_i=\cap\{Q: Q\in F_i\}$ for $i=1,2$. Then $F=F_1\cup F_2$ which implies that $J_1\cap J_2=I$. Since $I$ is $G$-prime we have $J_i\subseteq I$ for some $i\in \{1,2\}$ which implies that 
$F_i=\{Q\in \Prim(A): J_i\subseteq Q\}$ contains $F$. This finishes the proof.
\end{proof}

\begin{corollary}\label{cor-homeo}
Suppose that $(A,G, \alpha)$ is a system such that $A$ is separable, $G$ is countable, {the action of $G$ on $A$ is exact} and the action of $G$ on $\widehat{A}$ is essentially free. Then 
$$\ind: \Prim^G(A)\to \Prim(A\rtimes_rG);\quad   I\mapsto I\rtimes_rG$$
is a homeomorphism with inverse map given by 
$$\res:\Prim(A\rtimes_rG)\to \Prim^G(A); \quad P\mapsto P\cap A.$$
\end{corollary}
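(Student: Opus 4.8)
The plan is to obtain the asserted homeomorphism by composing results already in hand, the only genuinely new ingredient being the passage from prime ideals to primitive ideals on both sides. Indeed, Proposition \ref{prop-prime} already delivers, under exactly these hypotheses, a homeomorphism $\res\colon\Prime(A\rtimes_rG)\cong\Prime^G(A)$ with inverse $\ind$, so it suffices to replace each prime-ideal space by the corresponding primitive-ideal space.

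First I would note that $A\rtimes_rG$ is separable, since $A$ is separable and $G$ is countable: the crossed product is the closed span of the countable family of elements $a u_g$ with $a$ ranging over a countable dense subset of $A$ and $g\in G$. For any separable C*-algebra the prime ideals coincide with the primitive ideals (every primitive ideal is prime, as recalled in the proof of Lemma \ref{lem-prime}, and the converse is a theorem of Dixmier, using that $\Prim$ is then second countable, hence Baire). Applied to $A\rtimes_rG$ this gives the equality of topological spaces $\Prim(A\rtimes_rG)=\Prime(A\rtimes_rG)$. On the other side, the separability of $A$ lets me invoke Lemma \ref{lem-prime} to identify $\Prime^G(A)=\Prim^G(A)$. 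Composing, the homeomorphism of Proposition \ref{prop-prime} becomes a homeomorphism $\Prim(A\rtimes_rG)\cong\Prim^G(A)$, and by construction it is still given by $\res\colon P\mapsto P\cap A$ with inverse $\ind\colon I\mapsto I\rtimes_rG$, which is exactly the assertion.

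I do not expect a serious obstacle, since all the weight has been placed on Theorem \ref{Sierakowski}, Proposition \ref{prop-prime}, and Lemma \ref{lem-prime}. The one point to handle with care is that the two prime-equals-primitive identifications are compatible with the homeomorphism of Proposition \ref{prop-prime}: one must check that $\res$ carries primitive ideals of $A\rtimes_rG$ precisely onto the $G$-primitive ideals of $A$ (equivalently, that $\ind$ sends $\Prim^G(A)$ into $\Prim(A\rtimes_rG)$), so that the restricted bijection is literally $\res$ with inverse $\ind$ and not merely some abstract homeomorphism between the two spaces. Once this compatibility is lined up, the conclusion is immediate.
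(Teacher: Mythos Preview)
Your proposal is correct and follows essentially the same route as the paper: combine Proposition~\ref{prop-prime} with Lemma~\ref{lem-prime}, and use separability of $A\rtimes_rG$ to identify $\Prime(A\rtimes_rG)$ with $\Prim(A\rtimes_rG)$ (the paper observes this is Lemma~\ref{lem-prime} with trivial $G$, which is the same Dixmier fact you cite). Your final worry about ``compatibility'' is unnecessary, since the identifications $\Prime^G(A)=\Prim^G(A)$ and $\Prime(A\rtimes_rG)=\Prim(A\rtimes_rG)$ are literal equalities of subsets of $\mathcal I(A)$ and $\mathcal I(A\rtimes_rG)$, not merely abstract homeomorphisms, so the map $\res$ from Proposition~\ref{prop-prime} is automatically the desired map.
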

\begin{proof} This follows as a direct combination of Proposition \ref{prop-prime} together with Lemma \ref{lem-prime} (which also implies that $\Prim(A\rtimes_rG)=\Prime(A\rtimes_rG)$ since $A\rtimes_rG$ is separable).
\end{proof}

Recall that if $(A,G,\alpha)$ is a system with $G$ discrete, then every  $*$-representation
$\pi:A\to \B(H)$ gives rise to an induced representation $\ind \pi: A\rtimes_rG\to \B(l^2(G,H))$
which is the integrated form $\tilde\pi\times \lambda$ of the covariant homomorphism $(\tilde\pi,\lambda)$ of $(A,G,\alpha)$ on $l^2(G,H)$ given by the formulas
$$\big(\tilde\pi(a)\xi\big)(g)=\pi(\alpha_{g^{-1}}(a))\xi(g)\quad\text{and}\quad 
\big(\lambda(h)\xi\big)(g)=\xi(h^{-1}g),  \quad \forall \xi\in l^2(G,H), a\in A, h,g\in G.$$

 If $J=\ker\pi$ is $G$-invariant, then $G$ acts on $A/J$ and $\ker \tilde\pi = J$, hence  $\ind\pi$ gives a faithful representation of $A/J\rtimes_rG$. When the action of $G$ on $A$ is exact, we see that $\ker(\ind\pi)=J\rtimes_rG$. 
 It can be shown  (e.g., see \cite[Proposition 9]{Green}) that in general $\ker(\ind\pi)$ depends only on $\ker \pi$ hence
 the definition of induced ideals given earlier for  $G$-invariant ideals can be extended 
 to all ideals of $A$ by letting
 $\ind J = \ker (\ind\pi)$, where $\pi$ is any representation of $A$ with $\ker \pi = J$.

Moreover, it is easily checked that for a general representation $\pi:A\to\B(H)$ the induced representation 
 $\ind\pi$ is unitarily equivalent to $\ind(\pi\circ \alpha_g)$ for all $g\in G$ (the equivalence being  implemented by the unitary $\lambda(g)$). Thus we see that on the level of ideals we get for all $g\in G$ and for every ideal $J\in \I(A)$ that
 $$\ind J=\ind (\alpha_g(J))=\ind(J^G).$$
 Combining this with Corollary \ref{cor-homeo} and Lemma \ref{primG} 
 we get {the following corollary.}
 
 \begin{corollary}\label{cor-quasi-orbit}
 {Suppose that $(A,G, \alpha)$ is a system such that $A$ is separable, $G$ is countable, {the action of $G$ on $A$ is exact} and the action of $G$ on $\widehat{A}$ is essentially free.} Then
$$\ind:\OO(\Prim(A))\to\Prim(A\rtimes_rG); \quad \OO(P)\mapsto \ind P$$
is a homeomorphism.
 \end{corollary}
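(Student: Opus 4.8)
The plan is to realize the map $\OO(P)\mapsto \ind P$ as a composition of two homeomorphisms already at hand, so that no fresh topological argument is needed. Concretely, Lemma \ref{primG} gives a homeomorphism $\OO(\Prim(A))\to \Prim^G(A)$, $\OO(P)\mapsto P^G$, while Corollary \ref{cor-homeo} gives a homeomorphism $\ind\colon \Prim^G(A)\to \Prim(A\rtimes_rG)$ which on a $G$-primitive ideal $I$ is $I\mapsto I\rtimes_rG$. Since a composition of homeomorphisms is a homeomorphism, the composite
$$\OO(\Prim(A))\longrightarrow \Prim^G(A)\longrightarrow \Prim(A\rtimes_rG),\qquad \OO(P)\longmapsto P^G\longmapsto P^G\rtimes_rG,$$
is a homeomorphism, and it remains only to identify it with the map in the statement.

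For this identification I would use the two displayed identities from the discussion immediately preceding the corollary. Since $P^G$ is $G$-invariant, the original definition of induction on $G$-invariant ideals gives $\ind(P^G)=P^G\rtimes_rG$; and the relation $\ind J=\ind(J^G)$, valid for every $J\in\I(A)$, specializes at $J=P$ to $\ind P=\ind(P^G)$. Combining these yields $\ind P=P^G\rtimes_rG$, which is exactly the value of the composite at $\OO(P)$. Hence the composite coincides with $\OO(P)\mapsto \ind P$, and the corollary follows.

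The one point that requires a word of care --- and the closest thing to an obstacle here --- is that $\OO(P)\mapsto\ind P$ is well defined on quasi-orbits, i.e.\ that $\ind P$ is independent of the representative $P$. This is furnished by the very identity $\ind P=\ind(P^G)$, because $P^G=\bigcap_{g\in G}\alpha_g(P)$ coincides with the kernel of the orbit closure and so depends only on $\OO(P)$; this is precisely the well-definedness already embedded in Lemma \ref{primG}. Apart from verifying this compatibility of formulas, the substance of the result lies entirely in Lemma \ref{primG} and Corollary \ref{cor-homeo}, so I anticipate no further difficulty.
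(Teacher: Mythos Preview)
Your proposal is correct and follows essentially the same approach as the paper: the authors likewise obtain the map as the composite of the homeomorphism $\OO(P)\mapsto P^G$ from Lemma~\ref{primG} with the homeomorphism $I\mapsto I\rtimes_rG$ from Corollary~\ref{cor-homeo}, and use the identity $\ind P=\ind(P^G)=P^G\rtimes_rG$ established just before the corollary to identify the composite with $\OO(P)\mapsto \ind P$.
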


\section{Orbits, stabilizers, and quasi-orbits of an adelic space.}

Let $K$ be an algebraic number field with ring of algebraic integers $R$.  
The nonzero elements of $R$ form a semigroup denoted $\rx$, which contains 
the group $R^*$ of invertible elements or {\em units} of $R$.  
In this section we will study the primitive ideal space of the C*-algebra
$\cT[R]$ which was introduced in \cite{CDL} in terms of generators and relations
and shown to be isomorphic to the C*-algebra of the left regular representation of the semigroup 
$\rxrx$ on $\ell^2(\rxrx)$.
Because of  Propositions 5.1 and 5.2  of \cite{CDL}, there is an action of the group $\kxkx$ on a locally compact Hausdorff space $\oma$ such that 
$\cT[R]$ is isomorphic to a full corner in 
the transformation group C*-algebra $C_0(\oma) \rtimes \kxkx$,
and thus 
its primitive ideal space is naturally homeomorphic to 
that of $C_0(\oma) \rtimes \kxkx$. 
{For a  specific description of the homeomorphism  see \cite[Lemma 2.7]{LR}.}
   We want to use the results from 
    Section~\ref{GRST} above, for which it becomes important to
understand the orbits, the stability subgroups, and the quasi-orbit space
of the action of $\kxkx$ on $\oma$. 

We begin by recalling the definition of the space $\oma$ and the action of $\kxkx$ 
from section~5 of \cite{CDL}.
Denote by $\af$ the locally compact, totally disconnected ring of finite adeles over $K$.
The finite adeles have a maximal compact subring  $\hat R$, the {\em integral adeles}, whose group of units (i.e. invertible elements)  is 
denoted $\hat R^*$, the compact group of {\em integral ideles}.
The space $\oma$ is the quotient of $\af \times \af $ by the equivalence
\[
(r,a) \sim (s, b) \quad \iff \quad a\hat R^* = b \hat R^* \text{ and }  r-s \in a\hat R.
\]
If we denote the class of $(r,a) \in \af\times\af$  by $\omega_{r,a}$, then the group $\kxkx$ acts on $\oma$  in the obvious way by 
$(x,k) \cdot \omega_{r,a} = \omega_{x+kr, ka}$.

Denote by $\cP$ the set of all prime ideals of $K$.
Recall from section 5 of  \cite{CDL} that each point $ \omega_{r,a} \in \oma$ has a valuation vector
$\{v_P(a)\}_{P\in \cP}$  {which depends only on (and in fact characterizes) the second component}. {This valuation vector can be defined as follows:} if  $a\in \hat R$, let $v_P(a)$ be the smallest  
 nonnegative integer $n$ such that the canonical projection of $a$ in 
 $R/P^{n+1}$ is nonzero, and put  $v_P(a) = \infty$ if $a$ projects onto $0 \in R/P^n$ for every $n$. If $a$ is a finite adele, then there exists $d \in R$ such that $da \in \hat R$, and we let 
 $v_P(a) = v_P(da) - v_P(d)$, which does not depend on $d$.
 
{Thus, we may regard  the second component of a point of $\oma$ as a {\em superideal} for which, in analogy to the supernatural numbers, we allow infinite powers and also finitely many negative ones.} Hence we write
\[
\af /\hat R^* \cong \prod _{P \in \cP}(P^{\Z \cup \{\infty\}}; P^{ \N \cup \{\infty\}}), \qquad a \mapsto \prod_\cP P^{e_P(a)}
\]
where the restriction in the direct product is that the elements of $\af/\hat R^*$ can have negative exponents 
for at most finitely many prime ideals. The topology in each coordinate is from the natural 
order in $\Z$; as usual, the factor $P^\infty $ corresponds to $0 \in K_P$ in the $P$ coordinate.

  \begin{lemma}\label{orbitclosure}
 { For each  $a \in \af/\hat R^*$, let $Z(a): = \{ P\in \cP:  a_P =0\} = \{P\in \cP: e_P(a) = \infty\}$.
  Then the closure of the orbit of $\omega_{r,a}$ is the set 
  $\{\omega_{s,b} \in \oma: Z(b) \supset Z(a)\}$.}
  \end{lemma}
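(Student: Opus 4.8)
The plan is to prove the two inclusions separately, the first by a soft closedness argument and the second by an explicit approximation.

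For the inclusion $\overline{\Orb(\omega_{r,a})}\subseteq\{\omega_{s,b}\in\oma: Z(b)\supset Z(a)\}$, I would first note that the zero set of the second component is an orbit invariant: since $(x,k)\cdot\omega_{r,a}=\omega_{x+kr,ka}$ and $v_P(ka)=v_P(k)+v_P(a)$ with $v_P(k)\in\Z$ finite for every $P$ (because $k\in K^*$), one has $Z(ka)=Z(a)$ for all $(x,k)\in\kxkx$. Thus the whole orbit already lies in the set on the right, and it remains to see that this set is closed. The assignment $\oma\to\af/\hat R^*$, $\omega_{r,a}\mapsto a\hat R^*$, is well defined and continuous for the quotient topology, and under the identification $\af/\hat R^*\cong\prod_\cP(\ldots)$ the condition $P\in Z(b)$ is the preimage of $\{\infty\}$ under the $P$-th coordinate projection. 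Since $\{\infty\}$ is closed in the order topology on $\Z\cup\{\infty\}$ (its complement $\Z$ being open), each set $\{e_P=\infty\}$ is closed, so $\{\omega_{s,b}: Z(b)\supset Z(a)\}=\bigcap_{P\in Z(a)}\{e_P=\infty\}$ is closed; containing the orbit, it contains its closure.

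For the reverse inclusion I would fix $\omega_{s,b}$ with $Z(b)\supset Z(a)$ and show that every basic neighborhood meets the orbit. Such a neighborhood is cut out by a finite set of primes $T\subset\cP$ and integers $N,M$, requiring: at each $P\in T$ with $e_P(b)<\infty$, exact agreement $e_P=e_P(b)$ of the second component and agreement of the first component modulo $P^{e_P(b)}$; at each $P\in T$ with $e_P(b)=\infty$, the weaker conditions $e_P\geq N$ and agreement of the first component modulo $P^{M}$. I would meet these in two steps. First, using the independence of the valuations $\{v_P\}_{P\in T}$ on $K^*$, choose $k\in K^*$ with $v_P(k)=e_P(b)-v_P(a)$ for $P\in T\setminus Z(b)$ and $v_P(k)\geq N-v_P(a)$ for $P\in T\cap(Z(b)\setminus Z(a))$ (at primes of $T\cap Z(a)$ nothing need be imposed, since $e_P(ka)=\infty$ automatically); this makes the second component of $\omega_{x+kr,ka}$ satisfy all the required conditions, for any $x$. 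This is exactly where $Z(a)\subset Z(b)$ is used: a prime in $Z(a)\setminus Z(b)$ would force $e_P(ka)=\infty\neq e_P(b)$, which no choice of $k$ could repair. Second, with $k$ now fixed, the first-component conditions amount to finitely many congruences $x\equiv (s-kr)\pmod{P^{L_P}}$ for $P\in T$, with $L_P=e_P(b)$ or $L_P=M$, which are solvable by some $x\in K$ because $K$ is dense in $\af$. The resulting orbit point $\omega_{x+kr,ka}$ then lies in the given neighborhood.

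The main obstacle is this second inclusion, and within it the interplay between the two coordinates: one must fix $k$ to place the valuation vector correctly \emph{before} solving for $x$, since the modulus $(ka)\hat R$ governing the first component itself depends on $k$. A secondary point I would want to pin down carefully is the explicit neighborhood basis of $\omega_{s,b}$ used above; this follows from the realization of $\oma$ as the quotient of $\af\times\af$ by the open equivalence relation defining $\sim$ (orbit equivalence under the compact group $\hat R\rtimes\hat R^*$), together with the product-of-order topology on the second coordinate. The number-theoretic inputs — independence of the valuations to produce $k\in K^*$ and density of $K$ in $\af$ to produce $x$ — are standard and enter only at the two steps above.
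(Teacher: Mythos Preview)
Your proof is correct and follows essentially the same route as the paper: both inclusions are handled separately, the first by observing orbit-invariance of $Z(a)$ and closedness of the target set, the second by producing $k\in K^*$ with prescribed valuations at a finite set of primes and then invoking density of $K$ in $\af$ to adjust the first coordinate. The only real difference is in how $k$ is manufactured: you appeal abstractly to the independence of the finitely many valuations $v_P$, $P\in T$, whereas the paper constructs $k$ concretely by choosing an auxiliary prime $Q$ in the inverse ideal class of $\prod_j P_j^{e_j}$ (and distinct from the $P_j$) so that $Q\prod_j P_j^{e_j}$ is principal with generator $k$. Both are standard number-theoretic facts and yield the same conclusion; the paper's version has the minor advantage of being self-contained and making the existence of $k$ explicit, while yours is cleaner to state.
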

  \begin{proof}
  It is easy to see that the set $\{\omega_{s,b} \in \oma: Z(b) \supset Z(a)\}$
  is closed and that every point in the orbit of $\omega_{r,a}$ 
 has a second coordinate that vanishes on $Z(a)$
  (and possibly also at other primes). Thus $\{\omega_{s,b} \in \oma: Z(b) \supset Z(a)\}$ 
 contains the closure of the orbit of $\omega_{r,a}$.

To prove the reverse inclusion, {consider} $\omega_{r,a} \in \oma$ and let $(s,b)$ be a point in $\af\times (\af/\hat R^*)$ with 
$Z(b) \supset Z(a)$.
{Let $W_2$ be typical basic neighborhood of $b \in \af/\hat R^*$:
\[
W_2 := \{ c\in \af/ \hat R^*: v_{P_j}(c) = v_{P_j}(b)  \text{ for } P_j \notin Z(b) \text{ and } 
v_{P_j}(c)  \geq n_j  \text { for } P_j \in Z(b)\} 
\]
where
$\{P_1, P_2, \cdots ,P_n\}$ is a given finite set of prime ideals  and $n_j \geq 0$  is a given integer for every $j$ such that $P_j \in Z(b)$. }
For each $j$ such that $P_j$ is not in $Z(b)$ choose $e_j=v_{P_j}(b)-v_{P_j}(a)$; 
  for each $j$ such that $P_j \in Z(b)\setminus Z(a)$, let 
  $e_j = n_j + | v{P_j}(a)|$; finally let $e_j = 0$ for each $j$ such that $P_j \in Z(a)$ (this last choice is not essential,  we just make it to be thorough).

Now let $Q$ be a {prime} ideal in the inverse class of 
 the product  $ \prod_{j=1}^n P_j^{e_j} $ such that $Q \neq P_j$ for $j = 1, 2, \cdots, n$ and choose $k$ to be a generator of the 
 principal ideal $Q \prod_{j=1}^n P_j^{e_j} $. Then $k a \in W_2$.
 
 {If $W_1$ is any neighborhood of $s$ in $\af $, there exists $x\in K$ such that 
 $x + kr \in W_1$, because $K$ is dense in $\af$. Since (the images of) neighborhoods of the form
 $V = W_1 \times W_2$ form a neighborhood basis for 
 $\omega_{s,b}$  in the quotient $\oma$, and since we have shown that every such neighborhood contains a point of the form $\omega_{x+kr, ka}$, we conclude that 
$\omega_{s,b}$ is in the closure of the orbit $\{\omega_{x+kr, ka}: x\in K, k \in K^*\}$
  of $\omega_{r,a}$.}  \end{proof}

It is now easy to describe the quasi-orbit space. 
Following Section 2 in \cite{LR} we consider
 the power set $2^\cP$ of the set of prime ideals of $R$ equipped with the power-cofinite topology, in which the basic open sets are
indexed by  finite $G\subseteq \cP$ and are given by $U_G = \{ T \in 2^\cP:  T\cap G =\emptyset\}$. 
\begin{lemma}\label{lem-quasiorbit}
 The correspondence $ \omega_{r,a} \mapsto Z(a)$
induces a homeomorphism of the quasi-orbit space to the power set of the prime ideals
 with the power-cofinite topology.
 \end{lemma}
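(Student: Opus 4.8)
The plan is to realize the assignment $\omega_{r,a}\mapsto Z(a)$ as a single map $Z\colon\oma\to 2^{\cP}$ and to verify that it is a continuous, open, surjective map whose fibres are exactly the quasi-orbits. Since a continuous open surjection is a quotient map, the bijection it induces on $\OO(\oma)=\oma/\!\sim$ will then automatically be a homeomorphism onto $2^{\cP}$ with the power-cofinite topology.

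First I would identify the fibres. By \lemref{orbitclosure} the closure of the orbit of $\omega_{r,a}$ is $\{\omega_{s,b}:Z(b)\supseteq Z(a)\}$, and two such closures coincide precisely when $Z(a)=Z(b)$, because $Z(a)$ is recovered from the closure as the $\subseteq$-minimal set occurring among the second coordinates of points in $\overline{G\omega_{r,a}}$. Hence $\omega_{r,a}\sim\omega_{s,b}\iff Z(a)=Z(b)$, so the level sets of $Z$ are exactly the quasi-orbits and the induced map $\bar Z\colon\OO(\oma)\to 2^{\cP}$ is a well-defined injection. Surjectivity is immediate: given $T\subseteq\cP$, the class with $e_P=\infty$ for $P\in T$ and $e_P=0$ otherwise is a legitimate element of $\af/\hat R^*$ (it has no negative exponents), and its image under $Z$ is $T$.

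Next I would check continuity of $Z$. A basic open set of $2^{\cP}$ is $U_G=\{T:T\cap G=\emptyset\}$ with $G\subseteq\cP$ finite, and $Z^{-1}(U_G)=\{\omega_{r,a}:v_P(a)<\infty\text{ for all }P\in G\}$. Since the $P$-th coordinate map $a\mapsto e_P(a)$ is continuous into $\Z\cup\{\infty\}$ with the order topology, in which each integer is isolated and $\{\infty\}$ is closed, the condition $v_P(a)<\infty$ cuts out an open set in the $P$-coordinate; intersecting over the finitely many $P\in G$, and precomposing with the continuous second-coordinate projection $\oma\to\af/\hat R^*$, shows that $Z^{-1}(U_G)$ is open.

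The main work — and the step I expect to be the real obstacle — is openness of $Z$. Here I would take a typical basic neighbourhood $W_1\times W_2$ of a point $\omega_{s,b}$, with $W_2$ as in the proof of \lemref{orbitclosure}, and compute its image. Because $Z$ ignores the first coordinate and depends only on the class in $\af/\hat R^*$, that image equals $\{Z(c):c\in W_2\}$. Reading off the constraints defining $W_2$, a prime $P_j\notin Z(b)$ forces $v_{P_j}(c)=v_{P_j}(b)<\infty$, hence $P_j\notin Z(c)$, whereas at all remaining primes the valuation is unconstrained enough to be finite or $\infty$; realising each admissible pattern by a concrete class with no negative exponents then yields $Z(W_2)=U_G$ with $G=\{P_1,\dots,P_n\}\setminus Z(b)$ finite. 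Thus $Z$ sends basic open sets to basic open sets and is open. Combining the three properties, $Z$ is a quotient map, so $\bar Z$ is a continuous open bijection and therefore the desired homeomorphism $\OO(\oma)\cong 2^{\cP}$. The subtlety to watch throughout is that every prescribed value of $Z$ must be attained by a genuine element of $\af/\hat R^*$, that is, one with only finitely many negative exponents.
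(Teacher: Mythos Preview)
Your proof is correct and follows essentially the same route as the paper's: both use \lemref{orbitclosure} to identify the quasi-orbits with the level sets of $Z$, and both reduce the topological statement to computing that basic neighbourhoods in $\oma$ are carried to the basic power-cofinite sets $U_G$. The only difference is organisational: the paper cites \cite{Green} for the quasi-orbit map being continuous and open and \cite[Proposition 2.4]{LR} for the neighbourhood computation, whereas you prove continuity and openness of $Z$ directly and then invoke the general fact that a continuous open surjection is a quotient map---this makes your argument more self-contained but not genuinely different.
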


 \begin{proof}
 The preceding lemma shows that 
 two points are in the same quasi-orbit if and only if their second components vanish at exactly the same subset of prime ideals. This gives a bijection from the quasi-orbit space to $2^\cP$.
  The topology on the quasi-orbit space is obtained from the topology on $\oma$ through the quasi-orbit map $\omega_{r,a} \mapsto q(\omega_{r,a})$
 which is continuous and open by \cite[p.221]{Green}.
 
 Since the first component is lost in passing from $\omega_{r,a}$ to $Z(a)$,
  an argument similar to that in the proof of 
 \cite[Proposition 2.4]{LR} shows that 
 basic neighborhoods in $\oma$ are mapped exactly onto
 the power-cofinite neighborhoods of $2^\cP$. 
   \end{proof}
 
  \begin{example} The action of $\kxkx$ has many points with nontrivial stabilizer. Here we give some examples of such points together with their stabilizers.

(i) Let $(x,k) \in \kxkx$ and let $a\in \rx := R\setminus \{0\}$. Then the point  $(x,k) \omega_{0,1} = \omega_{x,k}$ coincides with $\omega_{0,1}  \in \oma$ 
 iff  $k \in \hat R^* \cap K = R^*$ and $x \in \hat R \cap K = R $, so the stabilizer of $\omega_{0,1} \in \oma$ is $R\rtimes R^*$.
 
(ii) The points $\omega_{m,k}$ in the orbit of $\omega_{0,1}$  have stabilizers 
$\{ (m,k) (r,w) (-mk\inv, k\inv): r\in R, \ w\in R^*\} = \{ (m (1-w) +kr, w):  r\in R, \ w\in R^*\}$. 

(iii) The stabilizer of the  point $\omega_{0,0}$ consists of elements $(x,k)$ such that 
$x = 0 \in (\af \cap K) /0= K$. That is, $\{0\} \times K^*$. 

(iv) The  points $\omega_{x,0}$ in the orbit of $\omega_{0,0}$ have stabilizers
$(x,1) (\{0\} \times K^*) (-x,1) = \{(x(1-k), k): k \in K^*\}$.
\end{example}

Despite the abundance of nontrivial stabilizers, we shall see next that 
there are sufficiently many points with trivial stabilizer 
to generate all quasi-orbits.
\begin{lemma}\label{trivialstabil}
For every subset $A$ of prime ideals there exists 
a point $\omega_{r,a}$ with trivial stabilizer and $Z(a) = A$.
\end{lemma}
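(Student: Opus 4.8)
The plan is to translate the fixed-point condition into explicit constraints on $(x,k)$, then to choose the second coordinate $a$ so that the additive group $a\hat R$ is both ``thin'' (nowhere dense) in $\af$ and meets $K$ only in $0$, and finally to pick the first coordinate $r$ generically so that these constraints force $(x,k)=(0,1)$.

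First I would unwind the equivalence relation defining $\oma$. Since $(x,k)\cdot\omega_{r,a}=\omega_{x+kr,ka}$, the element $(x,k)$ fixes $\omega_{r,a}$ precisely when $ka\hat R^*=a\hat R^*$ and $(x+kr)-r\in (ka)\hat R$. The first equation says $v_P(k)=0$ for every $P\notin A$ (at the primes of $A$ the valuation is already $\infty$, so there is no constraint), and granting it we have $(ka)\hat R=a\hat R$, so the second becomes $x+(k-1)r\in a\hat R$. Hence the stabilizer of $\omega_{r,a}$ is
$$\{(x,k)\in\kxkx : v_P(k)=0 \text{ for } P\notin A,\ \ x+(k-1)r\in a\hat R\}.$$

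Next I would fix $a$. Choosing a uniformizer $\pi_P$ at each prime, set $a_P=\pi_P$ for $P\notin A$ and $a_P=0$ for $P\in A$; then $a\in\hat R$ and $Z(a)=A$. The point of this choice is that $v_P(a)\ge 1$ for infinitely many $P$, which holds because $\cP$ is infinite and hence at least one of $A$, $\cP\setminus A$ is infinite. This yields two facts. First, $a\hat R\cap K=\{0\}$, since a nonzero element of $K$ is a unit at almost every prime and so cannot satisfy $v_P(x)\ge v_P(a)$ at all $P$. Second, $a\hat R=\prod_P\frakp^{v_P(a)}$ is a compact subgroup of $\af$ with empty interior (it contains no basic open set, as that would force $v_P(a)\le 0$ for almost all $P$), hence it is nowhere dense. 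Therefore $K+a\hat R=\bigcup_{y\in K}(y+a\hat R)$ is a countable union of nowhere dense sets, i.e.\ meager, and so is each set $B_k:=(k-1)^{-1}(K+a\hat R)$ for $k\in K^*\setminus\{1\}$.

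Finally I would choose $r$ to avoid every $B_k$. There are only countably many $k$, so $\bigcup_{k\ne 1}B_k$ is meager; as $\af$ is locally compact Hausdorff, hence a Baire space, there exists $r\in\af\setminus\bigcup_{k\ne 1}B_k$. For this $(r,a)$ the stabilizer is trivial: if $(x,k)$ fixes $\omega_{r,a}$ with $k\ne 1$, then $x+(k-1)r\in a\hat R$ forces $(k-1)r\in K+a\hat R$, i.e.\ $r\in B_k$, a contradiction; thus $k=1$, and then $x\in a\hat R\cap K=\{0\}$. The main obstacle is the second step, namely arranging that $K+a\hat R$ be topologically negligible: this is exactly where the choice of an $a$ with infinitely many positive valuations, together with the Baire property of $\af$, does the work, everything else being a routine unwinding of the equivalence relation. (A measure-theoretic variant is equally available, since the chosen $a\hat R$ has Haar measure zero in $\af$; and when $A\ne\emptyset$ one can bypass Baire category entirely by taking a single coordinate $r_{P_0}$ with $P_0\in A$ outside the image of $K$ in $K_{P_0}$, which already forces $k=1$ and then $x=0$.)
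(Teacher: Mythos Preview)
Your argument is correct. The unwinding of the stabilizer condition is accurate, the choice of $a$ with $v_P(a)\ge 1$ at every prime makes $a\hat R$ a closed subgroup of $\af$ with empty interior, and the Baire-category step then produces the required $r$.

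The paper proceeds differently. It treats the case $A\neq\emptyset$ exactly as in your closing parenthetical (choose $r_{P_0}\notin K$ at some $P_0\in A$ to force $k=1$, hence $x=0$), and for $A=\emptyset$ it gives an \emph{explicit} $r$: take $a$ with $\cQ:=\{P:v_P(a)>0\}$ infinite, split $\cQ=\cQ_1\sqcup\cQ_2$ into two infinite pieces, and set $v_P(r)=v_P(a)-1$ on $\cQ_1$ and $v_P(r)=v_P(a)$ on $\cQ_2$; a short valuation count then rules out any nontrivial fixing element. So the paper's proof is constructive and case-by-case, whereas yours is a uniform genericity argument. Your approach has the advantage of handling all $A$ at once and of making transparent that ``most'' $r$ work (comeager, and as you note, also full Haar measure); the paper's approach has the advantage of exhibiting a concrete point with trivial stabilizer, which is occasionally useful downstream (for instance, the proof of the essential-freeness corollary in the paper refers back to the specific points constructed for $A=\emptyset$).
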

\begin{proof}
If $P\in A$ the condition $\omega_{x+kr, ka} = \omega_{r,a}$ implies
$x+(k-1) r_P = 0$, that is, either $r_P = -x/(k-1) \in K$, or else  $k=1$ and $x=0$. 
When $A \neq \emptyset$ the first case can be eliminated by choosing
 $\omega_{r,a}$ with $r_P \notin K$. 

For $A =\emptyset$ choose any $a$ with $Z(a) = \emptyset$
such that $\cQ := \{ P\in \cP : v_P(a) >0\}$ is infinite.
Write $\cQ = \cQ_1 \sqcup \cQ_2$ with both  $\cQ_1$ and $ \cQ_2$
infinite and choose any $r$ such that 
 $v_P(r) = v_P(a) - 1$ for $P\in \cQ_1$ and 
 $v_P(r) = v_P(a) $ for $P\in \cQ_2$.
 
The group element $(x,k)$ fixes the point $\omega_{r,a}$ if and only if
$ka\hat R^* = a \hat R^*$ (so that $k\in R^*$)
and $x+(k-1) r \in a\hat R$. If $k\neq 1$, this last condition gives
$r \in \frac{1}{k-1} (a\hat R - x)$.
When $x=0$ this  is clearly impossible 
because the factor $\frac{1}{k-1}$ only reduces the valuation at finitely many places.
When $x\neq 0$ this is also impossible because subtracting $x$ reduces to zero the valuation 
 at all primes in $\cQ$ except possibly  at the finitely many  primes $P$
that satisfy $v_P(k) \neq 0$.

\end{proof}

As a direct corollary of the above result we get {the following.}
\begin{corollary}\label{cor-orbit}
The action of $\kxkx$ on $\oma$ is essentially free.
\end{corollary}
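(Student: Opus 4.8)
The plan is to unwind the definition of essential freeness in this commutative setting and then feed it the two structural facts already established, namely Lemma~\ref{orbitclosure} and Lemma~\ref{trivialstabil}. Since $A=C_0(\oma)$ is commutative, its spectrum $\widehat{A}$ is just $\oma$ equipped with the given $\kxkx$-action, so proving that the action on $\widehat A$ is essentially free amounts to showing that every closed $\kxkx$-invariant subset $C\subseteq\oma$ contains a dense invariant subset on which $\kxkx$ acts freely.

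First I would single out the obvious candidate for $D$: the set of those points of $C$ whose stabilizer is trivial. This set is automatically invariant, since points in a common orbit have conjugate stabilizers, so having trivial stabilizer is an orbit property, and as $C$ is invariant the trivial-stabilizer points of $C$ indeed form an invariant subset; moreover $\kxkx$ acts freely on it by construction. Thus the entire content of the statement reduces to the density of $D$ in $C$.

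To prove density, fix an arbitrary point $\omega_{s,b}\in C$. By Lemma~\ref{trivialstabil} there is a point $\omega_{r,a}$ with trivial stabilizer and $Z(a)=Z(b)$. The key observation is that, by Lemma~\ref{orbitclosure}, the orbit closure of a point depends only on the zero-set $Z$ of its second coordinate; since $Z(a)=Z(b)$, the points $\omega_{r,a}$ and $\omega_{s,b}$ have the same orbit closure, i.e.\ they lie in the same quasi-orbit. In particular $\omega_{r,a}\in\overline{\kxkx\cdot\omega_{s,b}}$, and since $C$ is closed and invariant this forces $\omega_{r,a}\in C$, hence also $\kxkx\cdot\omega_{r,a}\subseteq C$. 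Every point of this orbit has trivial stabilizer, so $\kxkx\cdot\omega_{r,a}\subseteq D$, and therefore $\omega_{s,b}\in\overline{\kxkx\cdot\omega_{r,a}}\subseteq\overline{D}$. As $\omega_{s,b}$ was arbitrary, $D$ is dense in $C$, which completes the argument.

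The only step that requires care, and which is the conceptual heart of the proof, is the transfer of a trivial-stabilizer point into the given closed invariant set $C$: one cannot simply use the point produced by Lemma~\ref{trivialstabil} directly, but must observe via Lemma~\ref{orbitclosure} that matching the $Z$-sets forces equality of orbit closures, so that the trivial-stabilizer point automatically lands inside $C$. Everything else, namely the invariance and freeness of $D$, is formal.
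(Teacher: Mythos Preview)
Your argument is correct. You unwind the definition of essential freeness for closed invariant subsets of $\oma$, take $D$ to be the trivial-stabilizer locus of $C$, and then use Lemma~\ref{trivialstabil} for the full range of zero-sets $A$ together with Lemma~\ref{orbitclosure} to show that every point of $C$ lies in the closure of some free orbit inside $C$. All steps are sound.

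The paper proceeds differently: it invokes only the case $A=\emptyset$ of Lemma~\ref{trivialstabil}, producing a single free orbit whose closure is all of $\oma$ (by Lemma~\ref{orbitclosure}, since $Z(a)=\emptyset$). Taken literally this only verifies the definition for $C=\oma$; your argument, by contrast, works quasi-orbit by quasi-orbit and covers every closed invariant $C$ explicitly. What your approach buys is completeness: the paper's one-line proof leaves the reader to fill in the passage from ``free points are dense in $\oma$'' to ``free points are dense in every closed invariant subset,'' and that passage genuinely requires the general case of Lemma~\ref{trivialstabil} (a point with $Z(a)=\emptyset$ does not lie in the invariant closed set $\{\omega_{s,b}:Z(b)\supseteq A\}$ for $A\neq\emptyset$). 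Your proof makes this explicit, at the modest cost of a longer argument.
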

\begin{proof} 
The points $\omega_{r,a}$ considered in the last part of the proof of Lemma \ref{trivialstabil} (for the case $A=\emptyset$) have trivial stabilizers and 
their orbits are dense by Lemma \ref{orbitclosure}.
\end{proof}

We are now ready for our main result, namely the 
characterization of the primitive  ideal space of the C*-algebra $\cT[R]$ 
from \cite{CDL}. Recall that $\cT[R]$ can be realized as the
 full corner in the crossed product
$C_0(\oma)\rtimes K\rtimes K^*$ corresponding to the full projection 
$\mathbf 1_{\omr}$ associated to the clopen subset $\omr \subset \oma$ as in
Proposition 5.1 of \cite{CDL}.

\begin{theorem}\label{main}
Let $2^\cP$ denote the power set of the set of prime ideals of $R$ with the 
power-cofinite topology. 
For each subset  $A$ of $\cP$ let $I_A$ be the kernel of the
 {compression to $\cT[R]$ of the} induced representation
corresponding to a point $\omega_{r,a}$ with $Z(a) = A$ and trivial stabilizer.
Then the map $A \mapsto I_A$ is a homeomorphism of $ 2^\cP$ to
$\Prim \cT[R] $.
\end{theorem}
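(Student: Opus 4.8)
The plan is to obtain the statement by concatenating the homeomorphisms already established, following the chain
$$2^\cP \;\cong\; \OO(\oma)\;=\;\OO(\Prim(C_0(\oma)))\;\xrightarrow{\ \ind\ }\;\Prim(C_0(\oma)\rtimes_r\kxkx)\;\cong\;\Prim(\cT[R])$$
from left to right and verifying that the total composite is exactly the map $A\mapsto I_A$.

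First I would check that the hypotheses of Corollary~\ref{cor-quasi-orbit} hold for $A=C_0(\oma)$ and $G=\kxkx$. The space $\oma$ is second countable and locally compact Hausdorff, so $C_0(\oma)$ is separable; and since $K$ is a number field both $K$ and $K^*$ are countable, so $G$ is countable. The group $\kxkx$ is metabelian, hence solvable and therefore amenable; consequently every action of it is exact and, what is important here, the full and reduced crossed products coincide, so the ambient algebra $C_0(\oma)\rtimes\kxkx$ of \cite{CDL} is identified with the reduced crossed product appearing in Corollary~\ref{cor-quasi-orbit}. Finally, as $C_0(\oma)$ is commutative we have $\widehat{C_0(\oma)}=\Prim(C_0(\oma))=\oma$, and the action of $G$ on this spectrum is essentially free by Corollary~\ref{cor-orbit}.

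Next I would make the two outer identifications explicit. Corollary~\ref{cor-quasi-orbit} provides a homeomorphism $\OO(P)\mapsto\ind P$ from $\OO(\Prim(C_0(\oma)))=\OO(\oma)$ onto $\Prim(C_0(\oma)\rtimes_r\kxkx)$, where a point $\omega_{r,a}\in\oma$ corresponds to the primitive ideal $P_{r,a}=\ker\ev_{\omega_{r,a}}$. On the other side, Lemma~\ref{lem-quasiorbit} gives a homeomorphism $\OO(\oma)\cong 2^\cP$, $\OO(\omega_{r,a})\mapsto Z(a)$, whose inverse sends a subset $A\subseteq\cP$ to the quasi-orbit of any $\omega_{r,a}$ with $Z(a)=A$; Lemma~\ref{trivialstabil} guarantees such a representative may be chosen with trivial stabilizer. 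Lastly, because $\cT[R]$ is the full corner of $C_0(\oma)\rtimes\kxkx$ cut by the full projection $\mathbf 1_{\omr}$, Rieffel's correspondence (in the concrete form of \cite[Lemma 2.7]{LR}) yields a homeomorphism $\Prim(C_0(\oma)\rtimes\kxkx)\cong\Prim(\cT[R])$ sending a primitive ideal to its compression; at the level of representations this is the compression of the induced representation to the corner.

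The one point requiring genuine care is matching the abstractly defined induced ideal with the concrete $I_A$. By the definition of induced ideals recalled before Corollary~\ref{cor-quasi-orbit}, $\ind P_{r,a}=\ker(\ind\ev_{\omega_{r,a}})$, and this ideal depends only on the quasi-orbit $\OO(P_{r,a})$; since all points with $Z(a)=A$ lie in one quasi-orbit, the choice of representative is immaterial and $A\mapsto I_A$ is well defined. Tracing a subset through $A\mapsto\OO(\omega_{r,a})\mapsto\ind P_{r,a}\mapsto I_A$ then shows the composite of the three homeomorphisms is precisely $A\mapsto I_A$, proving the theorem. (The trivial-stabilizer hypothesis plays a complementary role: for such a point $\ind\ev_{\omega_{r,a}}$ is irreducible and, by fullness of the corner, so is its compression, exhibiting $I_A$ directly as the kernel of an irreducible representation of $\cT[R]$.) I expect the main, if modest, obstacle to be exactly this verification that the corner compression carries $\ind P_{r,a}$ onto the ideal $I_A$ of the statement, together with the identification of full and reduced crossed products; everything else is a direct application of the results of \secref{GRST} and the present section.
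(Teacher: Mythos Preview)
Your proposal is correct and follows essentially the same route as the paper: the paper's proof simply cites \cite[Lemma 2.7]{LR} for the corner compression and then invokes Lemmas~\ref{lem-quasiorbit} and~\ref{trivialstabil} together with Corollary~\ref{cor-orbit}, leaving implicit the application of Corollary~\ref{cor-quasi-orbit} and the verification of its hypotheses. Your version is more explicit---spelling out separability, countability, amenability (hence exactness and the coincidence of full and reduced crossed products), and the tracing of the composite map---but the underlying argument is identical.
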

\begin{proof} {The compression by a full projection respects unitary equivalence and gives a homeomorphism of primitive ideals, see, e.g. Lemma 2.7 of \cite{LR}. Hence
the result is a direct consequence of Lemmas \ref{lem-quasiorbit} and \ref{trivialstabil}
and of Corollary \ref{cor-orbit} above.}
\end{proof}

\begin{remark}
Since the action of $K\rtimes K^*$ on $\oma$ is essentially free, Sierakowski's theorem (Theorem \ref{Sierakowski}) also gives a direct correspondence between the ideals of $\cT[R]$ and the invariant ideals in $C_0(\oma)$, i.e., the invariant open sets in $\oma$, which are in  bijection to the open subsets of $\mathcal O(\oma)\cong 2^{\cP}$, since every invariant open set is the union of the quasi-orbits it contains.
\end{remark}

\begin{remark} {In the situation of Theorem 2.8 of \cite{LR}, where only the multiplicative action of $\Q_+^*$ on the  finite (rational) adeles is considered, the nontrivial isotropy appears in the primitive ideal space through the infinite torus $\widehat{\Q_+^*}$ sitting above the only point with nontrivial isotropy.  This had originally led us to believe that $\cT[R]$ would have a complicated primitive ideal structure. Thus  Lemma~\ref{trivialstabil} was a welcome surprise:
the stability subgroups do not play a role in the primitive ideal space of $\cT[R]$,
{ultimately due to the additive action of $R$.}}
\end{remark}

\begin{remark} The description of the primitive ideal space shows that there is only one maximal ideal in $\cT[R]$, namely the primitive ideal $I_\cP$ corresponding to the set $\cP$ viewed as an element of $2^\cP$. 
Therefore, the quotient $\cT[R]/I_{\cP}$ is the only simple quotient of $\cT[R]$, which is isomorphic to the ring C*-algebre $\cA[R]$ considered by Cuntz and Li in \cite{CL1}.
\end{remark}

\end{document}